\newtheorem{theorem}{Theorem}[section]
\newtheorem{corollary}[theorem]{Corollary}
\newtheorem{lemma}[theorem]{Lemma}
\begin{document}

\title{Explicit Demazure character formula for negative dominant characters} 

\author{S. Senthamarai Kannan}
\maketitle

Chennai Mathematical Institute, Plot H1, SIPCOT IT Park, 

Siruseri, Kelambakkam  603103, Tamilnadu, India.

E-mail:kannan@cmi.ac.in.

\begin{abstract}
In this paper, we prove that for any semisimple simply connected algebraic
 group $G$, for any regular dominant character $\lambda$ of a maximal torus $T$ of $G$ and for any element $\tau$ in the Weyl group $W$, the character
$e^{\rho}\cdot char(H^{0}(X(\tau), \mathcal{L}_{\lambda-\rho}))$ is equal to 
the sum  $\sum_{w\leq \tau}char(H^{l(w)}(X(w),\mathcal{L}_{-\lambda}))^{*})$ of the characters of dual of the top cohomology modules on the Schubert varieties $X(w)$, $w$  running over all elements satisfying $w\leq \tau$. Using this result,
we give a basis of the intersection of the Kernels of the Demazure 
operators $D_{\alpha}$ using the sums of the characters of
$H^{l(w)}(X(w),\mathcal{L}_{-\lambda})$, where the sum is taken over
all elements $w$ in the Weyl group $W$ of $G$.
\end{abstract}

Keywords: Schubert varieties, Demazure operators, 
negative dominant characters.

\section{Introduction}
The following notations will be maintained throughout this paper.

Let $\mathbb{C}$ denote the field of complex numbers. Let  
$G$ a semisimple, simply connected algebraic group 
over $\mathbb{C}$.  We fix a maximal torus $T$ of $G$ and 
let $X(T)$ denote the set of characters
of $T$. Let $W = N(T)/T$ denote the Weyl group of $G$ with respect to $T$.
Let $R$ denote the set of roots of $G$ with respect to $T$.

Let $R^{+}$ denote the set of positive roots. Let $B^{+}$ be the Borel 
sub group of $G$ containing $T$ with respect to $R^{+}$.
Let $S = \{\alpha_1,\ldots,\alpha_l\}$ denote the set of simple roots in
$R^{+}$, where $l$ is the rank of $G$. 
Let $B$ be the Borel subgroup of $G$ containing $T$ with respect to the 
set of negative roots $R^{-} =-R^{+}$.

For $\beta \in R^+$ we also
use the notation $\beta > 0$.  
The simple reflection in the Weyl group corresponding to $\alpha_i$ is denoted
by $s_{\alpha_i}$.  

Let $\mathfrak{g}$ denote the Lie algebra 
of $G$. Let $\mathfrak{h}$ be the Lie algebra of $T$. Let $\mathfrak{b}$ 
be the Lie algebra of $B$.

We have $X(T)\bigotimes \mathbb{R}=Hom_{\mathbb{R}}(\mathfrak{h}_{\mathbb{R}}, \mathbb{R})$, the dual of the real form of $\mathfrak{h}$.

The positive definite $W$-invariant form on $Hom_{\mathbb{R}}(\mathfrak{h}_{\mathbb{R}}, \mathbb{R})$ induced by the Killing form of the 
Lie algebra $\mathfrak{g}$ of $G$ is denoted by $(~,~)$. 
We use the notation $\left< ~,~ \right>$ to
denote $\langle \nu, \alpha \rangle  = \frac{2(\nu,
\alpha)}{(\alpha,\alpha)}$. 



Let $\leq$ denote the partial order on $X(T)$ given by $\mu\leq \lambda$
if $\lambda-\mu$ is a non negative integral linear combination of simple 
roots. We also say that $\mu < \lambda$ if $\mu\leq \lambda$, and 
$\mu \neq \lambda$.

We denote by $X(T)^+$ the set of dominant characters of 
$T$ with respect to $B^{+}$. Let $\rho$ denote the half sum of all 
positive roots of $G$ with respect to $T$ and $B^{+}$.

We denote by $X(T)^{+}_{reg}$ by the set of all regular dominant characters 
of $T$.

For any simple root $\alpha$, we denote the fundamental weight
corresponding to $\alpha$  by $\omega_{\alpha}$. 

For $w \in W$, let $l(w)$ denote the length of $w$. We define the 
dot action by $w\cdot\lambda=w(\lambda+\rho)-\rho$.

Let $w_0 \in W$ denote the longest
element of the Weyl group.

For $w \in W$, let $X(w):=\overline{BwB/B}$ denote the Schubert variety in
$G/B$ corresponding to $w$.

Let $\leq$ denote the Bruhat order on $W$. We also say that 
$w < \tau$ if $w\leq \tau$, and  $w \neq \tau$.

When $\lambda$ is dominant,  we have $H^{i}(X(w), \mathcal{L}_{\lambda})=(0)$, 
for all $w\in W$ and for all $i\geq 1$. Further, Demazure character formula 
gives the character of the $T$-module $H^{0}(X(w), \mathcal{L}_{\lambda})$ for 
dominant characters $\lambda$ of $T$. 

Demazure character formula gives only the Euler characteristic of the 
line bundle $\mathcal{L}_{\lambda}$, when $\lambda$ is not dominant 
(see [7, II, 14.18])  or (see theorem in page 617, [1]).

However,  when $\lambda$ is not dominant, the character of the individual 
cohomology modules $H^{i}(X(w), \mathcal{L}_{\lambda})$ is not known 
explicitly.

It is a difficult problem to understand the explicit characters of the 
individual cohomology modules $H^{i}(X(w), \mathcal{L}_{\lambda})$ for an 
arbitrary $\lambda\in X(T)$. 

When $\lambda$ is a regular dominant character of $T$, we have 
$H^{i}(X(w), \mathcal{L}_{-\lambda})=(0)$ for all $i\neq l(w)$ 
(cf [2], corollary 4.1 ). Further, if $\langle \lambda, \alpha \rangle \geq 2$,   $H^{l(w)}(X(w), \mathcal{L}_{-\lambda})$ is a non zero highest weight 
$B$-module (see [8], theorem 5.8 (1)). 

So, we need to study the character of the $T$- module $H^{l(w)}(X(w),\mathcal{L}_{-\lambda})$ for any $w$, and for any regular dominant character $\lambda$ 
of $T$.

The aim of this paper is to give a formula for the character of  
$H^{l(\tau)}(X(\tau), \mathcal{L}_{-\lambda})$ for any regular dominant 
character $\lambda$ of $T$ and for any $\tau\in W$, inductively using the 
characters of $H^{l(w)}(X(w), \mathcal{L}_{-\lambda})$, where $w$ is running 
over elements of $W$ satisfying $w < \tau$.

More precisely, we prove that 

{\bf Theorem:}

{\it Let $\tau\in W$. Let $\lambda$ be a regular dominant character of $T$.

Then, the sum  $\sum_{w\leq \tau}char(H^{l(w)}(X(w),\mathcal{L}_{-\lambda}))^{*})$ is equal to $e^{\rho}\cdot char(H^{0}(X(\tau), \mathcal{L}_{\lambda-\rho}))$.}

Please see theorem (3.2) for a precise statement.
  
When $X(w)$ is Gorenstein, we relate two characters  $\psi_{w}$  
and  $\chi^{\prime}_{w}$ of $T$ which are involved in using Serre duality
on the Schubert Variety $X(w)$.

For a precise statement, see corollary (3.3).

The characters are discussed in [10] for arbitrary $G$, and in 
[11] when $G$ is of type $A_{n}$.

We give a basis of the intersections of the Kernels of the Demazure 
operators $D_{\alpha}$ using the sums of the characters of
$H^{l(w)}(X(w),\mathcal{L}_{-\lambda})$, where the sum is running over
$w\in W$.

For a precise statement, see corollary (3.4).

The organisation of the Paper is as follows:

Section 2 consists of Preliminaries. In Section 3, we prove the
Theorem stated above and derive some useful consequences. 

\section{Preliminaries}
\label{prelim}

We denote by $U$ (resp. $U^{+}$ ) the unipotent radical of $B$
(resp $B^{+}$). We denote by
$P_{\alpha}$ the minimal parabolic subgroup of $G$ containing $B$ and
$s_{\alpha}$.  Let $L_{\alpha}$ denote the Levi subgroup of
$P_{\alpha}$ containing $T$. We denote by $B_{\alpha}$ the
intersection of $L_{\alpha}$ and $B$. Then $L_{\alpha}$ is the product
of $T$ and a homomorphic image $G_{\alpha}$ of $SL(2)$ via a homomorphism
$\psi: SL(2)\longrightarrow L_{\alpha}$.
(cf. [7,II.1.1.4]). 

We refer to [6] for notation and preliminaries on semisimple Lie algebras
and their root systems.  

We choose an ordering of positive roots. Let $U_{\alpha}$ denote the $T$- 
stable one dimensional root subgroup of $G$ corresponding to a positive 
root $\alpha$.  We write $U^{+}$ as product 
$\Pi_{\alpha\in R^{+}}U_{\alpha}$ in the orderiong choosen as above.

For a fixed $w\in W$, the set of all positive roots $\alpha$ which are 
made negative by $w^{-1}$ by  $R^{+}(w^{-1})$. 

Given a $w \in W$ the closure in $G/B$ of the $B$ orbit of the coset
$wB$ is the Schubert variety corresponding to $w$, and is denoted by
$X(w)$.

For each $w\in W$, we can write the cell $C(w)$ in the 
Schubert variety $X(w)$ as product $\Pi_{\alpha\in R^{+}(w^{-1})}U_{\alpha}$
in the same ordering choosen as above.

Let $X_{\alpha}$ denote the coordinate function on $U^{+}$ correspoding to
the root subgroup $U_{\alpha}$ of $U^{+}$.

For any character $\chi$ of $B$, we denote by $\mathbb{C}_{\chi}$ the 
one dimensional representation of $B$ corresponding to $\chi$.

We make use of following points in computing cohomologies. 

{\it  Since $G$ is simply connected, the morphism 
$\psi: SL(2)\longrightarrow G_{\alpha}$ is an isomorphism, 
and hence $\psi:SL(2)\longrightarrow L_{\alpha}$  is injective.
We denote this copy of $SL(2)$ in $L_{\alpha}$  by $SL(2,\alpha)$ 
We denote by $B^{\prime}_{\alpha}$ the intersection of $B_{\alpha}$
and $SL(2,\alpha)$ in $L_{\alpha}$.

We also note that the morphism 
$SL(2, \alpha)/B_{\alpha}^{\prime}\hookrightarrow  L_{\alpha}/B_{\alpha}$ 
induced by $\psi$ is an isomorphism.}

{\it Since $L_{\alpha}/B_{\alpha}\hookrightarrow P_{\alpha}/B$ 
is an isomorphism, to compute the cohomology $H^{i}(P_{\alpha}/B, V)$
for any $B$- module $V$, we treat $V$ as a $B_{\alpha}$- module 
and we compute $H^{i}(L_{\alpha}/B_{\alpha}, V)$ }

 We recall some basic facts and results about Schubert
varieties. A good reference for all this is the book by Jantzen (cf
[7, II, Chapter 14 ]).

Let $w = s_{\alpha_{i_{1}}}s_{\alpha_{i_{2}}}\ldots s_{\alpha_{i_{n}}}$ be a reduced
expression for $w \in W$. Define 
\[
Z(w) = \frac {P_{\alpha_{i_{1}}} \times P_{\alpha_{i_{2}}} \times \ldots \times 
P_{\alpha_{i_{n}}}}{B \times \ldots
\times B},
\]
where the action of $B \times \ldots \times B$ on $P_{\alpha_{i_{1}}} \times P_{\alpha_{i_{2}}}
\times \ldots \times P_{\alpha_{i_{n}}}$ is given by $(p_1, \ldots , p_n)(b_1, \ldots
, b_n) = (p_1 \cdot b_1, b_1^{-1} \cdot p_2 \cdot b_2, \ldots
,b^{-1}_{n-1} \cdot p_n \cdot b_n)$, $ p_j \in P_{\alpha_{i_{j}}}$, $b_j \in B$. 
We denote by $\phi_w$ the birational surjective morphism
$
\phi_w : Z(w) \longrightarrow X(w)$.

We note that for each reduced expression for $w$, $Z(w)$ is smooth, however, 
$Z(w)$ may not be independent of a reduced expression.

Let $f_n : Z(w) \longrightarrow Z(ws_{\alpha_{n}})$ denote the map induced by the
projection $P_{\alpha_1} \times P_{\alpha_2} \times \ldots \times 
P_{\alpha_n} \longrightarrow P_{\alpha_1} \times P_{\alpha_2}
\times \ldots \times P_{\alpha_{n-1}}$. Then we observe that $f_n$ is a $ P_{\alpha_n}/B
\simeq {\bf P}^{1}$-fibration.

Let $V$ be a $B$-module. Let ${\mathcal L}_w(V)$ denote the pull back to $X(w)$ of the homogeneous vector bundle on $G/B$ associated to $V$. 
{\em By abuse of notation} we denote the pull back of ${\mathcal L}_w(V)$ to $Z(w)$ 
also by ${\mathcal L}_w(V)$, when there is no cause for confusion. 
Then, for $i \geq 0$, we have the following isomorphisms of
$B$-linearized sheaves
\[
R^{i}{f_{n}}_{*}{\mathcal L}_w(V) = {\mathcal
L}_{ws_{\alpha_{n}}}(H^{i}(P_{\alpha_n}/B, {\mathcal L}_w(V)).
\]
This together with easy applications of Leray spectral sequences is
the constantly used tool in what follows. We term this the {\em
descending 1-step construction}. 

We also have the {\em ascending 1-step construction} which too is used
extensively in what follows sometimes in conjunction with the
descending construction. We recall this for the convenience of the
reader.

Let the notations be as above and write $\tau = s_{\gamma}w$, with 
$l(\tau) = l(w) +1$, for some simple root $\gamma$.  Then we
have an induced morphism
\[
g_1: Z(\tau) \longrightarrow P_{\gamma}/B \simeq {\bf P}^1,
\]
with fibres given by $Z(w)$. Again, by an application of the Leray
spectral sequences together with the fact that the base is a ${\bf
P}^1$, we obtain for every $B$-module $V$ the following exact sequence
of $P_{\gamma}$-modules:
$$
(0) \longrightarrow H^{1}(P_{\gamma}/B, R^{i-1}{g_{1}}_{*}{\mathcal L}_w(V)) 
\longrightarrow 
H^{i}(Z(\tau) , {\mathcal L}_{\tau}(V)) \longrightarrow
H^{0}(P_{\gamma}/B , R^{i}{g_{1}}_{*}{\mathcal L}_w(V) ) \longrightarrow (0).$$

This short exact sequence of $B$-modules will be used frequently in this paper.
So, we denote this short exact sequence by {\it SES} when ever this is being 
used.

We also recall the following well-known isomorphisms:
\begin{itemize}

\item ${\phi_w}_*{\mathcal O}_{Z(w)} = {\mathcal O}_{X(w)}$.

\item $R^{q}{\phi_w}_*{\mathcal O}_{Z(w)} = 0$ for $q > 0$.

\end{itemize}

This together with [7, II. 14.6]  implies that we may use the
Bott-Samelson schemes $Z(w)$ for the computation and study of all the
cohomology modules $H^{i}(X(w) , {\mathcal L}_w(V))$. Henceforth in this paper 
we shall use the Bott-Samelson schemes and their
cohomology modules in all the computations.

{\it Simplicity of Notation}
If $V$ is a $B$-module and ${\mathcal L}_w(V)$ is the induced
  vector bundle on $Z(w)$ we denote the cohomology modules
  $H^{i}(Z(w) , {\mathcal L}_w(V))$ by $H^{i}(w ,V)$.

In particular if $\lambda$ is a character of $B$ we 
denote the cohomology modules $H^{i}(Z(w) , {\mathcal L}_{\lambda} )$ by
$H^i(w, \lambda)$. 

\subsubsection {Some constructions from Demazure's paper}
We recall briefly two exact sequences from [4] that Demazure used in his short
proof of the Borel-Weil-Bott theorem (cf. [3] ). We use the same 
notation as in [4]. 

Let $\alpha$ be a simple root and let $\lambda \in X(T)$ be a weight
such that $\langle \lambda , \alpha \rangle  \geq 0$. For such a 
$\lambda$, we denote by $V_{\lambda,\alpha}$ the module 
$H^0(P_{\alpha}/B, \mathcal{L}_{\lambda})$ . Let $\mathbb{C}_{\lambda}$ denote the one dimensional $B$- module.

Here, we recall the following lemma due to Demazure on a short exact sequence of $B$ - modules: (to obtain the second sequence we need to assume that
$\langle \lambda , \alpha \rangle \geq 2$).

\begin{lemma}

$$
\begin{array}{l}
\mbox{$(0) \longrightarrow K \longrightarrow V_{\lambda,\alpha} \longrightarrow \mathbb{C}_\lambda \longrightarrow (0)$}.\\
\mbox{$(0) \longrightarrow \mathbb{C}_{s_{\alpha}(\lambda)} \longrightarrow K \longrightarrow V_{\lambda-\alpha,\alpha}
\longrightarrow (0)$}.\\
\end{array}
$$
\end{lemma}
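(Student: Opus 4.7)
The plan is to reduce everything to the $\mathbb{P}^1$-case $P_\alpha/B \cong SL(2,\alpha)/B'_\alpha$, and then argue directly with the $SL(2,\alpha)$-module structure on $V_{\lambda,\alpha}$. Set $n := \langle \lambda,\alpha\rangle$. Under the identification $P_\alpha/B \cong \mathbb{P}^1$, the line bundle $\mathcal{L}_\lambda$ has degree $n$, the unique $B$-fixed point is $p := eB$, and the fiber $\mathcal{L}_\lambda|_p$ is the one-dimensional $B$-module $\mathbb{C}_\lambda$ by the associated-bundle convention used in the paper.

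For the first sequence, evaluation at $p$ gives a $B$-equivariant map
$$\mathrm{ev}_p : V_{\lambda,\alpha} = H^{0}(P_\alpha/B, \mathcal{L}_\lambda) \longrightarrow \mathcal{L}_\lambda|_p = \mathbb{C}_\lambda.$$
Because $n \geq 0$ the line bundle $\mathcal{L}_\lambda$ is globally generated on $\mathbb{P}^1$, so $\mathrm{ev}_p$ is surjective. Defining $K := \ker(\mathrm{ev}_p)$ yields the first exact sequence.

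For the second sequence, assume $n \geq 2$ and view $V_{\lambda,\alpha}$ as the irreducible $SL(2,\alpha)$-module of highest weight $\lambda$; its $T$-weight-string is $\lambda, \lambda-\alpha, \ldots, \lambda - n\alpha = s_\alpha(\lambda)$, with each weight space one-dimensional. The $B$-action factors through $B_\alpha = T \cdot U_{-\alpha}$, and $U_{-\alpha}$ shifts weights by $-\alpha$. Hence the unique one-dimensional $B$-stable subspace of $V_{\lambda,\alpha}$ is the extreme weight line $\mathbb{C}_{s_\alpha(\lambda)}$ (it is the only weight line annihilated by $e_{-\alpha}$). Since $s_\alpha(\lambda) \neq \lambda$, this line lies in $K$, producing a $B$-equivariant inclusion $\mathbb{C}_{s_\alpha(\lambda)} \hookrightarrow K$. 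The quotient $K/\mathbb{C}_{s_\alpha(\lambda)}$ has weight-string $\lambda-\alpha, \lambda-2\alpha, \ldots, \lambda-(n-1)\alpha$, matching that of $V_{\lambda-\alpha,\alpha}$ exactly (both spaces have dimension $n-1$). To promote this character equality to a $B$-module isomorphism, observe that in both modules $e_{-\alpha}$ acts nontrivially between consecutive weight spaces: in $V_{\lambda-\alpha,\alpha}$ by irreducibility, and in $K/\mathbb{C}_{s_\alpha(\lambda)}$ because $K$ inherits the cyclic $e_{-\alpha}$-action from $V_{\lambda,\alpha}$ (the only weight where this action becomes zero modulo the quotient is at $\lambda-(n-1)\alpha$, which is already the bottom of the quotient). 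A rescaling of weight vectors then furnishes the desired $B$-equivariant isomorphism.

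The main obstacle is precisely this last step: upgrading the matching of $T$-characters to a genuine $B$-module isomorphism $K/\mathbb{C}_{s_\alpha(\lambda)} \cong V_{\lambda-\alpha,\alpha}$. The key input is that both modules are cyclic under $U_{-\alpha}$ with one-dimensional weight spaces along the same weight-string; such modules are classified up to $B$-isomorphism by the weight-string alone, so the identification is forced. The hypothesis $\langle \lambda,\alpha \rangle \geq 2$ enters exactly at this point, ensuring that $V_{\lambda-\alpha,\alpha}$ is nonzero and has weight-string of the predicted length.
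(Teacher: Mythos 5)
Your proof is essentially correct, but note that the paper itself offers no proof of this lemma: it is quoted verbatim from Demazure [4], with the reader referred there, so there is no internal argument to compare against. Your route --- reduce to $P_{\alpha}/B\cong \mathbb{P}^{1}$, take $K$ to be the kernel of evaluation at the $B$-fixed point (surjective because $\deg \mathcal{L}_{\lambda}=\langle\lambda,\alpha\rangle\geq 0$), and then identify $K/\mathbb{C}_{s_{\alpha}(\lambda)}$ with $V_{\lambda-\alpha,\alpha}$ by matching full weight strings of uniserial $B_{\alpha}$-modules --- is a clean and valid characteristic-zero argument. Two points deserve to be made explicit. First, the reduction of the $B$-action to $B_{\alpha}=T\cdot U_{-\alpha}$ rests on the fact that the unipotent radical of $P_{\alpha}$ acts trivially on $\mathrm{ind}_{B}^{P_{\alpha}}\lambda$ (immediate from $f(vl)=f(l)$ for $v\in R_{u}(P_{\alpha})$, $l\in L_{\alpha}$, since $\lambda$ is trivial on $U$); you use this silently. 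Second, your key step --- that a $B_{\alpha}$-module with one-dimensional weight spaces along a contiguous $\alpha$-string on which all consecutive lowering maps are nonzero is determined up to isomorphism by its string, so that a rescaling of weight vectors gives the isomorphism $K/\mathbb{C}_{s_{\alpha}(\lambda)}\cong V_{\lambda-\alpha,\alpha}$ --- genuinely uses irreducibility of $H^{0}(\mathbb{P}^{1},\mathcal{O}(n))$ as an $SL(2)$-module, hence characteristic zero. That is harmless here since the paper works over $\mathbb{C}$, but Demazure's sequences hold in all characteristics; his derivation avoids the irreducibility input by working geometrically with the canonical sections attached to the two $T$-fixed points of $\mathbb{P}^{1}$ (realizing $K$ as sections vanishing at $eB$ and peeling off the line $\mathbb{C}_{s_{\alpha}(\lambda)}$ directly), which is what one must do to get the statement over an arbitrary field. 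Finally, your closing remark about where $\langle\lambda,\alpha\rangle\geq 2$ enters is accurate: it guarantees $\langle\lambda-\alpha,\alpha\rangle\geq 0$, so that $V_{\lambda-\alpha,\alpha}$ is defined and the quotient string has the stated length.
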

A consequence of the above exact sequences is the following crucial lemma,
a proof of which can be found in [4].  
\begin{lemma} 
\begin{enumerate}
\item Let $\tau = ws_{\alpha}$, $l(\tau) = l(w)+1$. If
$\langle \lambda , \alpha \rangle \geq 0$ then 
$H^{j}(\tau , \lambda) = H^{j}(w, V_{\lambda, \alpha})$ for all $j\geq 0$.
\item Let $\tau = ws_{\alpha}$, $l(\tau) = l(w)+1$. If
$\langle \lambda ,\alpha \rangle  \geq 0$, then $H^{i}(\tau , \lambda ) =
H^{i+1}(\tau , s_{\alpha}\cdot \lambda)$. Further, if
$\langle \lambda , \alpha \rangle  \leq -2$, then $H^{i}(\tau , \lambda ) =
H^{i-1}(\tau ,s_{\alpha}\cdot \lambda)$. 
\item If $\langle \lambda , \alpha \rangle  = -1$, then $H^{i}( \tau ,\lambda)$ vanishes for all $i\geq 0$ (cf. Prop 5.2(b), [7] ).
\end{enumerate}
\end{lemma}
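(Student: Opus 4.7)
I would prove all three parts uniformly from the descending $1$-step construction, applied to a reduced expression for $\tau$ whose final simple reflection is $s_\alpha$. This produces a $P_\alpha/B \cong \mathbf{P}^1$-fibration $f \colon Z(\tau) \to Z(w)$ together with the identification $R^q f_*\mathcal{L}_\lambda = \mathcal{L}_w(H^q(P_\alpha/B,\mathcal{L}_\lambda))$, which vanishes for $q\geq 2$. The Leray spectral sequence therefore collapses to a long exact sequence involving only the two rows $q=0,1$, so everything is reduced to computing $H^q(P_\alpha/B,\mathcal{L}_\lambda)$ as a $B$-module on a single copy of $\mathbf{P}^1$.

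For part (1), $\langle\lambda,\alpha\rangle \geq 0$ makes the fiber cohomology equal to $H^0 = V_{\lambda,\alpha}$ and $H^1 = 0$; the spectral sequence degenerates and yields $H^j(\tau,\lambda) = H^j(w,V_{\lambda,\alpha})$. For part (3), $\langle\lambda,\alpha\rangle = -1$ makes the line bundle on the fiber $\mathcal{O}_{\mathbf{P}^1}(-1)$, whose cohomology is zero in every degree; both $R^0 f_*\mathcal{L}_\lambda$ and $R^1 f_*\mathcal{L}_\lambda$ vanish and therefore $H^i(\tau,\lambda) = 0$ for all $i$.

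For part (2), I would first handle $\langle\mu,\alpha\rangle \leq -2$. There $H^0(P_\alpha/B,\mathcal{L}_\mu) = 0$ while $H^1(P_\alpha/B,\mathcal{L}_\mu) \cong V_{s_\alpha\cdot\mu,\alpha}$ as a $B$-module (note that $\langle s_\alpha\cdot\mu,\alpha\rangle = -\langle\mu,\alpha\rangle-2\geq 0$, so the right-hand side is well-defined). The spectral sequence then gives $H^i(\tau,\mu) = H^{i-1}(w,V_{s_\alpha\cdot\mu,\alpha})$, and part (1) applied to $s_\alpha\cdot\mu$ identifies the right-hand side with $H^{i-1}(\tau,s_\alpha\cdot\mu)$; this is the second half of (2). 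The first half, for $\langle\lambda,\alpha\rangle\geq 0$, follows by substituting $\mu = s_\alpha\cdot\lambda$ (which lies in the range $\langle\mu,\alpha\rangle\leq -2$) and using the involution identity $s_\alpha\cdot s_\alpha\cdot\lambda = \lambda$. I expect the main technical obstacle to be the $B$-equivariant identification $H^1(P_\alpha/B,\mathcal{L}_\mu)\cong V_{s_\alpha\cdot\mu,\alpha}$: this requires Serre duality on $\mathbf{P}^1$ together with a $B$-equivariant identification of the canonical bundle as $\mathcal{L}_{-\alpha}$, and then a careful check that the induced $B$-action on the dual space matches that on $V_{s_\alpha\cdot\mu,\alpha}$. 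Once this is in hand, the rest is routine bookkeeping with the Leray spectral sequence.
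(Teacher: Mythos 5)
The paper does not prove this lemma itself; it records Demazure's two short exact sequences as Lemma 2.1 and then states Lemma 2.2 as ``a consequence of the above exact sequences,'' deferring the argument to [4]. Your proposal is correct, but it takes a genuinely different route from the one the paper sets up: you never use Lemma 2.1. Instead you compute the fibre cohomology $H^{q}(P_{\alpha}/B,\mathcal{L}_{\mu})$ directly in all three ranges of $\langle\mu,\alpha\rangle$, identifying $H^{1}(P_{\alpha}/B,\mathcal{L}_{\mu})\cong V_{s_{\alpha}\cdot\mu,\alpha}$ for $\langle\mu,\alpha\rangle\leq -2$ via Serre duality on $\mathbf{P}^{1}$ (with $\omega_{P_{\alpha}/B}\cong\mathcal{L}_{-\alpha}$ equivariantly), and then let the degenerate Leray spectral sequence of $f\colon Z(\tau)\to Z(w)$ do the rest; deducing the $\langle\lambda,\alpha\rangle\geq 0$ half of (2) from the $\leq -2$ half via $\mu=s_{\alpha}\cdot\lambda$ and the involutivity of the dot action is clean and correct. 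Demazure's route, by contrast, only ever computes $H^{0}$ and $H^{1}$ of the fibre for $\langle\lambda,\alpha\rangle\geq -1$, and obtains the degree shift in (2) from the filtration $\mathbb{C}_{s_{\alpha}(\lambda)}\subset K\subset V_{\lambda,\alpha}$ with quotients $V_{\lambda-\alpha,\alpha}$ and $\mathbb{C}_{\lambda}$, together with the triviality of $\mathcal{L}_{\tau}(V_{\lambda,\alpha})$ along the fibres and an induction on $\langle\lambda,\alpha\rangle$ (handling $\langle\lambda,\alpha\rangle\in\{0,1\}$ separately, since the second sequence needs $\langle\lambda,\alpha\rangle\geq 2$). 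What your approach buys is uniformity and the elimination of that induction and case split; what Demazure's buys is the avoidance of any duality argument and of the equivariant identification of $H^{1}$ of the fibre, which you rightly flag as the one point in your plan requiring real care (the $B$-module structure on the Serre dual must be matched against $V_{s_{\alpha}\cdot\mu,\alpha}$; over $\mathbb{C}$ this is harmless since a $B$-isomorphism between irreducible $L_{\alpha}$-modules is automatically $L_{\alpha}$-equivariant). Either way the statement follows, so there is no gap, only a different engine.
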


\section{Character of $(H^{l(w)}(w,-\lambda))^{*}$}

In this section, we describe the character of the $T$-module 
$(H^{l(\tau)}(\tau, -\lambda))^{*}$ in terms of 
$e^{\rho}\cdot char(H^{0}(\tau, \lambda-\rho))$ and the sums of characters of 
$(H^{l(w)}(w,-\lambda))^{*}$, where the sum is taken over all elements $w$ 
of $W$ satisfying $w < \tau$.

So, in particular, we obtain the character of the $T$-module 
$H^{l(\tau)}(\tau, -\lambda)$. 

Let $D_{w}$ denote the boundary divisor of $X(w)$. For abuse of notation,
we denote the sheaf on $X(w)$ corresponding to the Weil divisor $D_{w}$
by $D_{w}$.
Choose a non zero section $s\in H^{0}(X(w),D_{w})$ such that the zero 
locus $Z(s)$ is $D_{w}$.

Let $\mu$ be a dominant character of $T$.

We now cosider the restriction map:

$res_{w}:H^{0}(w,\mu) \longrightarrow H^{0}(D_{w},\mathcal{L}_{\mu})$.

Let $Ker_w$ denote the kernel of $res_{w}$.

Let $\epsilon_{w}$ (resp. $\epsilon_{w}^{*}$ ) denote the character of the 
$T$- module $Ker_{w}$ (resp. $(Ker_{w})^{*}$).

Let $\tau\in W$.
Let $h^{0}(\tau, \mu)$  (resp.  $(h^{0}(\tau, \mu))^{*}$ ) 
denote the character of the $T$- module 
$H^{0}(\tau,\mu)$ (resp. $(H^{0}(\tau,\mu))^{*}$ ).  

We now prove the following lemma giving a description of the character 
$h^{0}(\tau, \mu)$:

\begin{lemma} For any $\tau\in W$ and for any dominant character $\mu$ of 
$T$, we have  
$\sum_{w \leq \tau} \epsilon_{w} = h^{0}(\tau, \mu)$.  
\end{lemma}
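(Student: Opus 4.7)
My plan is to prove the lemma by induction on $\ell(\tau)$, filtering $X(\tau)$ by unions of Schubert subvarieties ordered by a linear extension of the Bruhat interval $[e,\tau]$, and matching each graded piece with one of the modules $Ker_v$. The base case $\tau = e$ is immediate: $X(e)$ is a point, $D_e$ is empty, $Ker_e = H^0(e,\mu) = \mathbb{C}_\mu$, so $\epsilon_e = e^{\mu} = h^{0}(e,\mu)$.

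For the inductive step, fix a linear extension $e = v_1 < v_2 < \cdots < v_N = \tau$ of $[e,\tau]$, and set $Y_k := \bigcup_{i=1}^{k} X(v_i)$, viewed as a reduced closed subscheme of $X(\tau)$. I prove by a secondary induction on $k$ that $char(H^0(Y_k, \mathcal{L}_\mu)) = \sum_{i \leq k} \epsilon_{v_i}$; the case $k = N$ is the statement of the lemma. For the step from $k-1$ to $k$: the linear extension property forces every $v$ with $v < v_k$ to occur among $v_1, \ldots, v_{k-1}$, so set-theoretically $Y_{k-1} \cap X(v_k) = \bigcup_{v < v_k} X(v) = D_{v_k}$, and this identity lifts to the scheme-theoretic level by the compatible Frobenius splitting of Schubert varieties (Mehta--Ramanathan). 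This gives the short exact sequence of coherent sheaves on $Y_k$
$$
0 \longrightarrow \iota_{*}\mathcal{O}_{X(v_k)}(-D_{v_k}) \longrightarrow \mathcal{O}_{Y_k} \longrightarrow \mathcal{O}_{Y_{k-1}} \longrightarrow 0,
$$
where $\iota : X(v_k) \hookrightarrow Y_k$. Tensoring with $\mathcal{L}_\mu$, passing to cohomology, and invoking the vanishing $H^{1}(X(v_k), \mathcal{L}_\mu(-D_{v_k})) = 0$ (a standard consequence of the same splitting when $\mu$ is dominant) yields the short exact sequence of $T$-modules
$$
0 \longrightarrow Ker_{v_k} \longrightarrow H^{0}(Y_k, \mathcal{L}_\mu) \longrightarrow H^{0}(Y_{k-1}, \mathcal{L}_\mu) \longrightarrow 0,
$$
and summing characters closes the secondary induction.

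The main obstacle is the scheme-theoretic identity $Y_{k-1} \cap X(v_k) = D_{v_k}$ together with the accompanying vanishing $H^{1}(X(v_k), \mathcal{L}_\mu(-D_{v_k})) = 0$. Both follow from Mehta--Ramanathan's compatible Frobenius splitting of $G/B$, but if one prefers to remain strictly within the Bott--Samelson/Demazure SES toolkit assembled in Section 2, these can instead be transported from the Bott--Samelson variety $Z(\tau)$, where the analogous boundary is a simple normal crossings divisor, down to $X(\tau)$ via the birational morphism $\phi_\tau$ together with the isomorphisms $\phi_{\tau*}\mathcal{O}_{Z(\tau)} = \mathcal{O}_{X(\tau)}$ and $R^{q}\phi_{\tau*}\mathcal{O}_{Z(\tau)} = 0$ for $q > 0$ recalled in the preliminaries.
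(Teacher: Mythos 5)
Your argument is correct, but it takes a genuinely different route from the paper's. The paper works on the open cell: it restricts sections to $C(\tau)=\Pi_{\alpha\in R^{+}(\tau^{-1})}U_{\alpha}$, where $\mathcal{L}_{\mu}$ trivializes, regards each section as a polynomial in the coordinates $X_{\alpha}$, and partitions $H^{0}(\tau,\mu)$ according to the unique minimal $w\leq\tau$ such that only variables $X_{\alpha}$ with $\alpha\in R^{+}(w^{-1})$ occur; the piece indexed by $w$ is identified with $Ker_{w}$ and the characters are summed. That argument is coordinate-based and uses no cohomology vanishing. Your d\'evissage along a linear extension of $[e,\tau]$ instead rests on two external inputs, both standard consequences of compatible Frobenius splitting: the reducedness of $Y_{k-1}\cap X(v_{k})$ (so that the set-theoretic identity with $D_{v_{k}}$, which you justify correctly since every $v<v_{k}$ lies in $[e,\tau]$ and hence precedes $v_{k}$, holds scheme-theoretically) and the vanishing $H^{1}(X(v_{k}),\mathcal{L}_{\mu}(-D_{v_{k}}))=0$ for dominant $\mu$. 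Granting these, every step checks out; in particular $Ker_{v_{k}}=H^{0}(X(v_{k}),\mathcal{L}_{\mu}(-D_{v_{k}}))$ needs only left-exactness of global sections. What your route buys is rigor exactly where the paper is sketchiest (the existence of a unique minimal $w$ for each section, and the identification of the resulting piece with $Ker_{w}$, are asserted rather than proved there), and it upgrades the character identity to an honest $B$-module filtration of $H^{0}(\tau,\mu)$ with subquotients $Ker_{w}$; the cost is reliance on Frobenius-splitting machinery not assembled in Section 2. Two small caveats: your outer induction on $l(\tau)$ is never actually used, since the secondary induction on $k$ carries the whole proof; and the proposed fallback through the Bott--Samelson variety is sketchier than the main line, because $\phi_{\tau}^{-1}(D_{\tau})$ need not coincide with the normal crossings boundary of $Z(\tau)$, so the transport of the two key facts would require more care than the projection formula alone.
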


\begin{proof}
We fix a $\tau\in W$. 

Since the cell $C(\tau)=\Pi_{\alpha\in R^{+}(\tau^{-1})}U_{\alpha}$ is an open subset 
of $X(\tau)$, the restriction map $H^{0}(\tau, \mu)\longrightarrow H^{0}(C(\tau), \mathcal{L}_{\mu})$ is injective.  

Since  $C(\tau)$ is the affine space $\Pi_{\alpha\in R^{+}(\tau^{-1})}U_{\alpha}$, the restriction of the line bundle $\mathcal{L}_{\mu}$ to $C(\tau)$ is trivial.  

Thus, $H^{0}(\tau, \mu)$ is a subspace of $\mathbb{C}[X_{\alpha}: \alpha\in R^{+}(\tau^{-1})]$.

So, every section $s \in H^{0}(\tau, \mu)$ associates a polynomial $f_{s}$
in the variables $\{X_{\alpha}: \alpha\in R^{+}(\tau^{-1})\}$.

Now, for every section $s\in H^{0}(\tau, \mu)$, there is a   
 unique minimal element $w\leq \tau$ with respect to
the Bruhat order on $W$ such that the restriction $f_{s}$ of $s$ to 
$C(\tau)$ is a polynomial in the variables 
$\{X_{\alpha}: \alpha\in R^{+}(w^{-1})\}$.

Hence, for every $s\in H^{0}(\tau, \mu)$, there is a unique minmal 
$w\leq \tau$ such that $s\in Ker_{w}$.
 
Thus, the character of the $B$- module $H^{0}(\tau, \mu)$ is equal to the 
sum $\bigoplus_{w\leq \tau}\epsilon_{w}$ of the characters of the 
dual modules $Ker_{w}$, $w$ running over all elements of $W$ 
satisfying $w\leq \tau$.

Hence, we have
$\sum_{w \leq \tau} \epsilon_{w} = h^{0}(\tau, \mu)$. This completes the proof
of lemma. 

\end{proof}

Let $\lambda$ be a regular dominant character of $T$. That is $\lambda$
satisfies $\langle \lambda, \alpha \rangle \geq 1$ for each simple 
root $\alpha$. 

We recall notation from section 2:
For $w\in W$, we denote by $H^{l(w)}(w,-\lambda)$ the 
top cohomology of the line bundle $\mathcal{L}_{-\lambda}$ on $X(w)$ 
associated to $-\lambda$. 

Let $e^{\rho}$ denote the element of the representation ring $\mathbb{Z}[X(T)]$
of $T$ corresponding to $\rho$. 
Here, we use exponential notation 
$e^{\rho}$ for using multiplication in the ring $\mathbb{Z}[X(T)]$.

Let $h^{l(w)}(w,-\lambda)$ denote the character of the $T$- module 
$H^{l(w)}(w,-\lambda)$.  

Let $(h^{l(w)}(w,-\lambda))^{*}$ denote the character of the dual 
$H^{l(w)}(w, -\lambda)^{*}$ of the $T$- module $H^{l(w)}(w,-\lambda)$.  

We have the following theorem:

\begin{theorem} For any $\tau\in W$, we have 
$\sum_{w \leq \tau} (h^{l(w)}(w,-\lambda))^{*} = e^{\rho}\cdot
h^{0}(\tau, \lambda-\rho)$.  
\end{theorem}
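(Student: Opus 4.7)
The plan is to reduce the identity to a termwise statement and conclude via Lemma 3.1. Specifically, I claim that for every $w \in W$,
$$(h^{l(w)}(w,-\lambda))^{*} = e^{\rho} \cdot \epsilon_{w}.$$
Once this termwise identity is established, summing over $w \leq \tau$ yields the left-hand side of the theorem, while the right-hand side follows by applying Lemma 3.1 to the character $\mu = \lambda - \rho$, which is dominant since $\lambda$ is regular dominant. As a sanity check, the termwise identity holds trivially for $w = \mathrm{id}$, where both sides equal $e^{\lambda}$.

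To verify the termwise identity I would invoke $T$-equivariant Serre duality on the Bott-Samelson resolution $\phi_w : Z(w) \to X(w)$. Since $Z(w)$ is smooth and projective of dimension $l(w)$, and since $R^{q}({\phi_w})_{*}\mathcal{O}_{Z(w)} = 0$ for $q > 0$, the cohomology of any line bundle on $X(w)$ agrees with that on $Z(w)$. Serre duality then gives
$$H^{l(w)}(w,-\lambda)^{*} \cong H^{0}(Z(w),\, \omega_{Z(w)} \otimes \mathcal{L}_{\lambda}).$$
The geometric input is the canonical bundle formula $\omega_{Z(w)} \cong \mathcal{L}_{-\rho} \otimes \mathcal{O}(-\partial Z(w))$, valid modulo a specific $T$-equivariant character twist. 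Identifying $H^{0}(Z(w), \mathcal{L}_{\lambda-\rho}(-\partial Z(w)))$ with $Ker_{w}$ via the birational morphism $\phi_w$, using that $\phi_w$ carries the Bott-Samelson boundary onto the Schubert boundary $D_w$, then delivers the termwise identity, with the equivariant twist accounting for the factor $e^{\rho}$.

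The main obstacle is pinning down the precise $T$-character of the canonical bundle on $Z(w)$. The naive geometric isomorphism $\omega_{Z(w)} \cong \mathcal{L}_{-\rho}(-\partial Z(w))$ is off by exactly the character shift $e^{\rho}$ that appears in the theorem, so care is required when tracking equivariance. I would verify the shift first in the base case $w = s_{\alpha}$, where Lemma 2.2 lets one compute $H^{1}(P_{\alpha}/B, \mathcal{L}_{-\lambda})$ explicitly and compare with $Ker_{s_{\alpha}}$; a short check shows the two characters differ precisely by $e^{\rho}$. The general case then propagates by induction on $l(w)$, using the ascending $1$-step construction and the SES of Section 2 to reduce the Bott-Samelson canonical bundle computation to the $\mathbb{P}^{1}$-case handled by Lemma 2.2. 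With the termwise identity in hand, the theorem follows by summing over $w \leq \tau$ and invoking Lemma 3.1.
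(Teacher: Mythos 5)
Your top-level strategy is exactly the paper's: both arguments reduce the theorem to the termwise identity $(h^{l(w)}(w,-\lambda))^{*}=e^{\rho}\cdot\epsilon_{w}$ and then conclude by summing over $w\leq\tau$ and invoking Lemma 3.1 with $\mu=\lambda-\rho$. The divergence is in how the shift $e^{\rho}$ is pinned down, and here the paper takes a much shorter road. It applies Serre duality directly on the (Cohen--Macaulay) Schubert variety $X(w)$, writes $\omega_{X(w)}=\mathcal{L}_{-\rho}(-D_{w})$, and deliberately leaves the two equivariant twists undetermined: a character $\psi_{w}$ from Serre duality and a character $\chi_{w}$ from the choice of section cutting out $D_{w}$. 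Only the difference $\psi_{w}-\chi_{w}$ enters the character identity, and that difference is forced to equal $\rho$ by comparing a single weight on each side: $(H^{l(w)}(w,-\lambda))^{*}$ is a lowest weight module of lowest weight $w(\lambda-\rho)+\rho$ (because $H^{l(w)}(w,-\lambda)$ has highest weight $w(-\lambda+\rho)-\rho$), while $Ker_{w}$ has lowest weight $w(\lambda-\rho)$. This one-line weight comparison replaces the entire equivariant canonical-bundle computation that your sketch proposes.

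Your alternative route through the Bott--Samelson variety can be made to work, but as written it defers precisely the decisive step. Two places need real arguments. First, the identification of $H^{0}(Z(w),\omega_{Z(w)}\otimes\mathcal{L}_{\lambda})$ with $Ker_{w}$ does not follow merely from ``$\phi_{w}$ carries the Bott--Samelson boundary onto $D_{w}$'': the boundary of $Z(w)$ in general contains components that $\phi_{w}$ contracts to loci of smaller dimension (omitting a letter from a reduced word need not give a reduced word), so one must invoke $\phi_{w*}\omega_{Z(w)}=\omega_{X(w)}$ (the rational singularities statement for $\phi_w$), which is strictly stronger than the listed fact $\phi_{w*}\mathcal{O}_{Z(w)}=\mathcal{O}_{X(w)}$. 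Second, the $T$-equivariant twist on $\omega_{Z(w)}$ --- which you correctly identify as ``the main obstacle'' and which is exactly where the factor $e^{\rho}$ lives --- is only announced as a plan (base case plus induction via the SES), not carried out; until it is, the proof is incomplete at its crux. If you wish to keep your framework, the cheapest fix is to import the paper's trick: once you know the two characters agree up to multiplication by some $e^{\nu_{w}}$, evaluating lowest weights on both sides determines $\nu_{w}=\rho$ with no further geometry.
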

\begin{proof}
Let $w\in W$ be such that $w\leq \tau$.

Let $\omega_{X(w)}$ denote the dualising sheaf on $X(w)$.

{\it Observation 1 :}

By Serre duality, the $B$ modules $(H^{l(w)}(w , -\lambda))^{*}$  
and $H^{0}(w, \mathcal{L}_{\lambda}\otimes \omega_{X(w)})\bigotimes 
\mathbb{C}_{\psi_{w}}$ are isomorphic for some $\psi_{w}\in X(T)$. 
 
This character $\psi_{w}$ is discussed in [10].

On the other hand, we have $\omega_{X(w)}=-(\rho+D_{w})$,
where $D_{w}$ is the boundary of $X(w)$. So, we have 
$H^{0}(X(w), \mathcal{L}_{\lambda}\otimes \omega_{X(w)})=H^{0}(w, \lambda-\rho-D_{w})$

Choose a non zero section $s\in H^{0}(X(w), D_{w})$ such that the zero 
locus $Z(s)$ is $D_{w}$.

By excercise [5, II, 1.19 ], this section induces the following short exact 
sequence of sheaves on $X(w)$:

$$(0) \longrightarrow \mathcal{O}(-D_{w}) \longrightarrow 
\mathcal{O} \longrightarrow \mathcal{O}_{D_{w}} \longrightarrow 
(0).$$

Hence, we have the following exact sequence of $B$-modules:

$$(0) \longrightarrow H^{0}(w, \lambda-\rho-D_{w})\bigotimes \mathbb{C}_{\chi_{w}} \longrightarrow 
H^{0}(w,\lambda-\rho) \longrightarrow H^{0}(D_{w},\lambda-\rho) \longrightarrow (0).$$

Here, the $\mathbb{C}$ linear map $H^{0}(w, \lambda-\rho-D_{w})\otimes 
\mathbb{C}_{\chi_{w}} \longrightarrow H^{0}(w,\lambda-\rho)$ is induced by the 
multiplication by $s$, and the $\mathbb{C}$- linear map 
$H^{0}(w,\lambda-\rho) \longrightarrow H^{0}(D_{w},\lambda-\rho)$ is the restriction map, and we denote it by $res_{w}$. We note that $res_{w}$ is a 
homorphism of $B$- modules. 

Let $Ker_w$ denote the kernel of $res_{w}$.

From the above short exact sequence, we see that the character of the 
$B$- module $Ker_w$ is the same as that of  $H^{0}(w, \lambda-\rho-D_{w})
\bigotimes \mathbb{C}_{\chi_{w}}$.

Hence, the character of $H^{0}(w, \lambda-\rho-D_{w})$ is equal to 
$e^{-\chi_{w}}\cdot \epsilon_{w}$.

Using {\it Observation 1}, we see that the character of the $T$- module 
$(H^{l(w)}(w,-\lambda))^{*}$ is the same as that of 
$H^{0}(w, \lambda-\rho-D_{w}) \bigotimes \mathbb{C}_{\psi_{w}}$.

Thus, we have 

{\it Observation 2:} The character $h^{l(w)}(w,-\lambda)^{*}$ of 
$(H^{l(w)}(w,-\lambda))^{*}$ is equal to 
$e^{\psi_{w}-\chi_{w}}\cdot \epsilon_{w}$.

We now show that $\psi_{w}-\chi_{w}=\rho$.

Since $H^{l(w)}(w,-\lambda)$ is a highest weight $B$-module with highest
weight $w(-\lambda+\rho)-\rho$, its dual $(H^{l(w)}(w,-\lambda))^{*}$ 
is a lowest weight $B$-module with lowest weight $w(\lambda-\rho)+\rho$. 

On the other hand, $H^{0}(w,\lambda-\rho)$ is a lowest weight module with
lowest weight $w(\lambda)-w(\rho)$.

Hence, $Ker_{w}$ is a lowest weight module with lowest weight 
$w(\lambda)-w(\rho)$.

Using {\it Observation 2}, we have 

$w(\lambda-\rho)+\rho= w(\lambda)-w(\rho) + \psi_{w}-\chi_{w}$.

Hence, we have $\rho = \psi_{w}-\chi_{w}$.

Using {\it Observation 2}, we have

{\it Observation 3 :}

The character $h^{l(w)}(w,-\lambda)^{*}$ is equal to  
$e^{\rho}\cdot \epsilon_{w}$.

Now, taking $\mu=\lambda -\rho$ in  lemma(3.1), 
we have $\sum_{w\leq \tau} \epsilon_{w}
= h^{0}(\tau,\lambda-\rho)$.

Thus, using {\it Observation 3}, we see that 

$\sum_{w\leq \tau} (h^{l(w)}(w,-\lambda))^{*}= 
\sum_{w\leq \tau} e^{\rho}\cdot\epsilon_{w}= e^{\rho}\cdot h^{0}(\tau,\lambda-\rho)$. 

This completes the proof of the theorem.

\end{proof}

Let $w\in W$ be such that $X(w)$ is Gorenstein. 
Let $\chi^{\prime}_{w}$ be the character of $T$ such that 
$\mathcal{L}_{-2\rho+\chi^{\prime}_{w}}$  is the canonical line bundle on $X(w)$. 

When $G$ is of type $A_{n}$, $\chi^{\prime}_{w}$ is described in a nice 
combinatorial way in theorem 2, page 209 of [11]. 

Then, by Serre duality, there is a character $\psi_{w}$
of $T$ such that the $B$- modules $H^{l(w)}(w, -\lambda)^{*}$ and 
$H^{0}(w, \lambda-2\rho+\chi^{\prime}_{w})\bigotimes \mathbb{C}_{\psi_{w}}$
are isomorphic.

With notation as above, the following corollary relates the two characters 
$\chi^{\prime}_{w}$ and $\psi_{w}$ of $T$.

\begin{corollary}
Then, we have $\psi_{w}=\rho+w(\rho)-w(\chi^{\prime}_{w})$.
\end{corollary}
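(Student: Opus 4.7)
The plan is to argue exactly as in \emph{Observation 2} of the proof of Theorem 3.2: identify the lowest weight on each side of the Serre duality isomorphism and match them.

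First, I would recall that since $\lambda$ is a regular dominant character, $H^{l(w)}(w,-\lambda)$ is a (nonzero) highest weight $B$-module with highest weight $w(-\lambda+\rho)-\rho = w\cdot(-\lambda)$, by the result of \cite{[8]} cited in the introduction. Dualizing, $(H^{l(w)}(w,-\lambda))^{*}$ is a lowest weight $B$-module with lowest weight $-w(-\lambda+\rho)+\rho = w(\lambda-\rho)+\rho$.

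Next, I would analyze the right-hand side $H^{0}(w,\lambda-2\rho+\chi^{\prime}_{w})\otimes \mathbb{C}_{\psi_{w}}$. For any character $\mu$ of $T$ for which $H^0(w,\mu)$ admits a $B$-eigenvector generating the socle, its lowest weight is $w(\mu)$; this is the same fact used in Theorem 3.2 (``$H^{0}(w,\lambda-\rho)$ is a lowest weight module with lowest weight $w(\lambda)-w(\rho)$''). Since the Serre duality isomorphism identifies this tensor product with the lowest weight module $(H^{l(w)}(w,-\lambda))^{*}$, the right-hand side must also be a lowest weight $B$-module, and its lowest weight is
\[
w(\lambda-2\rho+\chi^{\prime}_{w}) + \psi_{w} = w(\lambda) - 2w(\rho) + w(\chi^{\prime}_{w}) + \psi_{w}.
\]

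Finally, equating the two expressions for the lowest weight,
\[
w(\lambda)-w(\rho)+\rho = w(\lambda) - 2w(\rho) + w(\chi^{\prime}_{w}) + \psi_{w},
\]
and cancelling $w(\lambda)$ and rearranging yields $\psi_{w} = \rho + w(\rho) - w(\chi^{\prime}_{w})$, as required.

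The only real subtlety — and the main point where I would have to be careful — is justifying that the right-hand side is a lowest weight module with lowest weight $w(\lambda-2\rho+\chi^{\prime}_{w})+\psi_{w}$. This is not entirely automatic because $\lambda-2\rho+\chi^{\prime}_{w}$ need not be dominant, so the usual Borel--Weil description does not apply directly; however, it is forced by the Serre duality isomorphism with the genuinely lowest weight module $(H^{l(w)}(w,-\lambda))^{*}$, together with the fact that a $B$-linearized line bundle on $X(w)$ restricted to the $T$-fixed point $wB/B$ contributes the weight $w(\lambda-2\rho+\chi^{\prime}_{w})$ to any global section supported there. Once this identification of lowest weights is in place, the remainder of the argument is the short linear computation above.
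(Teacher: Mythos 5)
Your proposal is correct and follows essentially the same route as the paper: both arguments equate the lowest weights of the two sides of the Serre duality isomorphism $(H^{l(w)}(w,-\lambda))^{*}\cong H^{0}(w,\lambda-2\rho+\chi^{\prime}_{w})\otimes \mathbb{C}_{\psi_{w}}$ and solve the resulting linear equation for $\psi_{w}$. Your added remark justifying that the right-hand side has lowest weight $w(\lambda-2\rho+\chi^{\prime}_{w})+\psi_{w}$ is a point the paper simply asserts, so it is a welcome but not divergent elaboration.
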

\begin{proof}

By Serre duality, the $B$ modules $(H^{l(w)}(w , -\lambda))^{*}$ and 
$H^{0}(w, \lambda-2\rho+\chi^{\prime}_{w})\bigotimes \mathbb{C}_{\psi_{w}}$
are isomorphic.

Hence, the lowest weights of these two $B$- modules  
are the same.

Thus, we have $w(\lambda-\rho)+\rho=w(\lambda-\rho)-w(\rho)+
w(\chi^{\prime}_{w})+\psi_{w}$.

Hence, we have $\psi_{w}=\rho+w(\rho)-w(\chi^{\prime}_{w})$.

\end{proof} 

Let $D_{\alpha}$ denote the Demazure operator on $\mathbb{Z}[X(T)]$
corresponding to a simple root $\alpha$.

We recall from [7, II, 14.17] that 

$$D_{\alpha}(e^{\lambda})= \frac{e^{\lambda}-e^{s_{\alpha}(\lambda)-\alpha}}{1 - e^{-\alpha}}.$$ 

Let $N_{\alpha}$ denote the kernel of $D_{\alpha}$.

Let $N$ denote the intersection $\bigcap_{\alpha\in S}N_{\alpha}$ of the 
kernels of all $D_{\alpha}$'s, $\alpha$ running over all simple roots.

Then, we have 

\begin{corollary}
$\{\sum_{w\in W} h^{l(w)}(w, -\lambda):\lambda\in X(T)^{+}_{reg} \}$ forms a 
$\mathbb{Z}$ basis for $N$.
\end{corollary}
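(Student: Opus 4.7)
The plan is to specialize Theorem 3.2 to $\tau = w_{0}$, the longest element of $W$, and then identify the intersection $N$ with a well-understood subring of $\mathbb{Z}[X(T)]$. Since $w \le w_{0}$ for every $w \in W$, Theorem 3.2 at $\tau = w_{0}$ gives
$$\sum_{w\in W}\bigl(h^{l(w)}(w,-\lambda)\bigr)^{\ast} \;=\; e^{\rho}\cdot h^{0}(w_{0},\lambda-\rho) \;=\; e^{\rho}\cdot \chi(\lambda-\rho),$$
because $H^{0}(w_{0},\lambda-\rho) = V(\lambda-\rho)$ is the irreducible $G$-module (here $\lambda-\rho$ is dominant since $\lambda$ is regular dominant), and its character $\chi(\lambda-\rho)$ is $W$-invariant. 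The involution $\ast\colon e^{\nu}\mapsto e^{-\nu}$ is a ring automorphism of $\mathbb{Z}[X(T)]$ that sends $W$-invariants to $W$-invariants. Applying $\ast$ to both sides yields
$$\sum_{w\in W}h^{l(w)}(w,-\lambda)\;=\;e^{-\rho}\cdot \psi_{\lambda}, \qquad \psi_{\lambda}:=\chi(\lambda-\rho)^{\ast}=\chi(-w_{0}(\lambda-\rho))\in\mathbb{Z}[X(T)]^{W}.$$

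Next I identify $N$. From $D_{\alpha}(f) = (f - e^{-\alpha}s_{\alpha}(f))/(1 - e^{-\alpha})$ one sees $f\in\mathrm{Ker}\,D_{\alpha}$ iff $s_{\alpha}(f) = e^{\alpha}f$, which is equivalent to $s_{\alpha}(e^{\rho}f) = e^{\rho-\alpha}\cdot e^{\alpha}f = e^{\rho}f$. Hence $N_{\alpha} = e^{-\rho}\cdot\mathbb{Z}[X(T)]^{s_{\alpha}}$, and intersecting over all simple roots gives the clean description $N = e^{-\rho}\cdot \mathbb{Z}[X(T)]^{W}$. In particular $\sum_{w} h^{l(w)}(w,-\lambda) = e^{-\rho}\psi_{\lambda}$ already lies in $N$ for every regular dominant $\lambda$.

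Finally, I check that $\lambda \mapsto -w_{0}(\lambda - \rho)$ is a bijection $X(T)^{+}_{\mathrm{reg}} \to X(T)^{+}$: the shift $\lambda \mapsto \lambda - \rho$ bijects $X(T)^{+}_{\mathrm{reg}}$ with $X(T)^{+}$, and $-w_{0}$ permutes the dominant chamber since it permutes the simple roots. Under this bijection the family $\{\psi_{\lambda}\}_{\lambda \in X(T)^{+}_{\mathrm{reg}}}$ becomes exactly the family $\{\chi(\mu):\mu\in X(T)^{+}\}$ of irreducible Weyl characters. Since $\chi(\mu) = e^{\mu} + \sum_{\mu'<\mu}c_{\mu'}e^{\mu'}$, these are unitriangular with respect to the orbit-sum $\mathbb{Z}$-basis of $\mathbb{Z}[X(T)]^{W}$, so they themselves form a $\mathbb{Z}$-basis of $\mathbb{Z}[X(T)]^{W}$. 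Multiplying by $e^{-\rho}$ transports this to the claimed $\mathbb{Z}$-basis of $N$. The only real obstacle is the bookkeeping in the first paragraph and the identification $N = e^{-\rho}\mathbb{Z}[X(T)]^{W}$; once these are in hand, the basis claim follows by a standard triangularity argument.
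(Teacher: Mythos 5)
Your proof is correct, and it follows the same overall skeleton as the paper --- specialize Theorem 3.2 to $\tau=w_{0}$ and identify $N$ with $e^{-\rho}$ times the $\mathbb{Z}$-span of the Weyl characters --- but your identification of $N$ is genuinely different and cleaner. The paper establishes the intermediate characterization $N=\{v\in\mathbb{Z}[X(T)]: D_{\alpha}(e^{\rho}v)=e^{\rho}v$ for all $\alpha\in S\}$ by an induction on the dominance order: it picks a maximal weight $\mu$ occurring in $v$, subtracts the $\alpha$-string $a\sum_{i=0}^{t+1}e^{\mu-i\alpha}$ (which lies in $\ker D_{\alpha}$), and recurses; it then invokes the fact that the common fixed points of all the $D_{\alpha}$ are exactly the $\mathbb{Z}$-span of the characters $h^{0}(w_{0},\mu)$, $\mu\in X(T)^{+}$. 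You bypass that induction entirely with the one-line functional equation $D_{\alpha}(f)=0 \iff s_{\alpha}(f)=e^{\alpha}f \iff s_{\alpha}(e^{\rho}f)=e^{\rho}f$, which gives $N_{\alpha}=e^{-\rho}\mathbb{Z}[X(T)]^{s_{\alpha}}$ and hence $N=e^{-\rho}\mathbb{Z}[X(T)]^{W}$ directly; this is shorter and avoids the somewhat delicate bookkeeping with the maximal weight $\mu$ in the paper's recursion. You are also more explicit at the final step: the paper stops at $\sum_{w\in W}(h^{l(w)}(w,-\lambda))^{*}=e^{\rho}h^{0}(w_{0},\lambda-\rho)$ and asserts that the corollary ``follows from Observation 1,'' leaving implicit the application of the involution $e^{\nu}\mapsto e^{-\nu}$, the identity $\chi(\mu)^{*}=\chi(-w_{0}\mu)$, and the bijection $\lambda\mapsto -w_{0}(\lambda-\rho)$ from $X(T)^{+}_{reg}$ onto $X(T)^{+}$; you spell these out, together with the standard unitriangularity argument showing the Weyl characters form a $\mathbb{Z}$-basis of $\mathbb{Z}[X(T)]^{W}$. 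In short, your write-up replaces the paper's key lemma with a more elementary argument and fills in the dualization bookkeeping that the paper leaves to the reader; the paper's route, in exchange, keeps the discussion phrased entirely in terms of the Demazure operators and their fixed points, which is the framework it wants to emphasize.
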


\begin{proof}

We first show that $N$ consinsts of all $v\in \mathbb{Z}[X(T)]$
such that $D_{\alpha}(e^{\rho}v)=e^{\rho}v$ for each simple root 
$\alpha\in S$.

We fix a simple root $\alpha$. Let $v\in \mathbb{Z}[X(T)]$ be such that 
$D_{\alpha}(v)=0$.

Let $\mu\in X(T)$ be such that the coefficient of $e^{\mu}$ in the expression 
of $v$ is non zero but the coefficient of $e^{\mu^{\prime}}$ in the expression 
of $v$ is zero for every $\mu^{\prime} > \mu$ in the dominant ordering. 

Since $D_{\alpha}(v)=0$, using lemma(2.2), we see that 
either $\langle \mu, \alpha \rangle=-1$ and the coefficient of 
$e^{\mu-\alpha}$ in the expression of $v$ is zero or 
$\langle \mu, \alpha \rangle \geq 0$ and the coefficient of 
$e^{\mu-i\alpha}$ in the expression of $v$ is non zero for every 
$i=0,1,\cdots, \langle \mu, \alpha \rangle, 1 + \langle \mu, \alpha \rangle$. 

Let $t=\langle \mu, \alpha \rangle$. 

With out loss of generality, we may assume that the coefficient of $e^{\mu}$
in the expression of $v$ is a positive integer, say $a$. 

Proof of the case $t=-1$ is quite simple. So, we may assume that $t\geq 0$.

Using lemma(2.2), we see that $\sum_{i=0}^{t+1}e^{\mu-i\alpha}$ is in the 
Kernel of $D_{\alpha}$. Hence, we have $D_{\alpha}(v - a(\sum_{i=0}^{t+1}e^{\mu-i\alpha}))=0$. Hence, by induction on the dominant ordering on $X(T)$, we have 
$$D_{\alpha}(e^{\rho}(v - \sum_{i=0}^{t+1}e^{\mu-i\alpha}))=e^{\rho}(v - \sum_{i=0}^{t+1}e^{\mu-i\alpha}).$$ 

On the other hand, $D_{\alpha}(e^{\rho}(\sum_{i=0}^{t+1}e^{\mu-i\alpha}))
=e^{\rho}(\sum_{i=0}^{t+1}e^{\mu-i\alpha})$. 

Thus, we have $D_{\alpha}(e^{\rho}v)=e^{\rho}v$. 

Since $\alpha\in S$ was arbitrary, we must have $N=\{v\in \mathbb{Z}[X(T)]:
D_{\alpha}(e^{\rho}v)=e^{\rho}v ~ for ~ all ~ \alpha \in S\}$.

On the other hand,  $D_{\alpha}(v^{\prime})=v^{\prime}$ for every simple root
$\alpha$ if and only if $v^{\prime}$ is an integral linear combination 
of $h^{0}(w_{0}, \mu)$, $\mu$ running over dominant characters of $T$. 

Thus, we have 

{\it Observation 1 :}

The subset $\{e^{-\rho}h^{0}(w_{0}, \mu): \mu\in X(T)^{+}\}$ forms
a basis for $N$.

By theorem(3.2), we have  

$\sum_{w \in W} (h^{l(w)}(w,-\lambda))^{*} = e^{\rho}h^{0}(w_{0}, \lambda-\rho)$.  

The assertion of corollary follows from {\it Observation 1}.

\end{proof}

\end{document}